\documentclass[11pt,a4paper]{article}
\usepackage{indentfirst}
\usepackage{amsfonts}
\usepackage{amssymb}
\usepackage{mathrsfs}
\usepackage{amsmath}
\usepackage{amsthm}
\usepackage{enumerate}
\usepackage{cite}
\usepackage{mathrsfs}
\allowdisplaybreaks
\usepackage{geometry}
\usepackage{ifpdf}
\ifpdf
\usepackage[colorlinks=true,linkcolor=blue,citecolor=red, final,backref=page,hyperindex]{hyperref}
\else
\usepackage[colorlinks,final,backref=page,hyperindex,hypertex]{hyperref}
\fi

\usepackage{txfonts}
\usepackage{indentfirst}
\usepackage{latexsym}
\usepackage[all]{xy}



\hoffset=0cm
\oddsidemargin=0pt
\marginparsep=0cm
\marginparwidth=0cm
\voffset=-1.5cm

\textheight=23cm
\textwidth=16cm

\def\<{\langle}
\def\>{\rangle}
\def\a{\alpha}
\def\b{\beta}


\newtheorem{thm}{Theorem}[section]

\newtheorem{prop}[thm]{Proposition}
\newtheorem{ex}[thm]{Example}

\theoremstyle{definition}
\newtheorem{defn}{Definition}[section]

\theoremstyle{remark}
\newtheorem{re}{Remark}[section]
\begin{document}
	\title{\bf Cohomology of compatible BiHom-Lie algebras}
		\author{\bf  Asif Sania, Basdouri Imed, Bouzid Mosbahi, Nacib Saber }
	\author{{ Asif Sania \footnote{Corresponding author,  E-mail: 11835037@zju.edu.cn}, Basdouri Imed $^{1}$
			\footnote { Corresponding author,  E-mail:  basdourimed@yahoo.fr}
            ,\ Bouzid Mosbahi$^{2}$
			\footnote { Corresponding author,  E-mail: mosbahibouzid@yahoo.fr}
			,\  Nacib Saber $^{3}$
			\footnote { Corresponding author,  E-mail: sabernacib83@hotmail.com}
		}\\
		{\small 1. Nanjing University of Information Science and Technology, Nanjing, PR China.}\\
			{\small 2.  University of Gafsa, Faculty of Sciences Gafsa, 2112 Gafsa, Tunisia } \\
        {\small 3.  University of Sfax, Faculty of Sciences Sfax,  BP
			1171, 3038 Sfax, Tunisia}\\
		{\small 4.  University of Sfax, Faculty of Sciences Sfax,  BP
			1171, 3038 Sfax, Tunisia}}
	\date{}
	\maketitle

	\begin{abstract}
This paper defines compatible BiHom-Lie algebras by twisting the compatible Lie algebras by two linear commuting maps. We show the characterization of compatible BiHom-Lie algebra as a Maurer-Cartan element in a suitable bidifferential graded Lie algebra. We also define a cohomology theory for compatible BiHom-Lie algebras.
	\end{abstract}
	\textbf{Key words}:\, BiHom-Lie algebra, Compatible BiHom-Lie algebra, Cohomology, graded Lie algebra.
	
	\textbf{Mathematics Subject Classification}:\,16R60, 17B05, 17B40, 17B37
	
	\numberwithin{equation}{section} 	\tableofcontents

	\section{Introduction}
The motivation to study BiHom-Lie algebra structure lies in its importance in the physics and deformations theory  of Lie algebras, in particular Lie algebras of vector fields. BiHom-Lie algebra was first introduced by
Cheng and Qi in \cite{2}, is a $4$-tuple $(g, [\cdot,\cdot], \alpha, \beta)$, where $g$ is a $\mathbb{K}$-linear space, $\alpha, \beta: g \rightarrow g,$  are linear maps and $[\cdot,\cdot] : g \otimes g \longrightarrow g$ is Lie bracket operation on $g$, with notation
$[\cdot,\cdot](p \otimes q) = [p,q]$, satisfying the following conditions, for all $p, q, r \in g:$
\begin{center}
 $\alpha \circ \beta = \beta \circ \alpha,$\\
 $[\beta(p), \alpha(q)] = -[\beta(q),\alpha(p)]$ (skew-symmetry),\\
 $[\beta^{2}(p),[\beta(q),\alpha(r)]] + [\beta^{2}(q),[\beta(r),\alpha(p)]] + [\beta^{2}(r),[\beta(p),\alpha(q)]]=0$  (BiHom-Jacobi condition).
\end{center}
The primary characteristic of BiHom-Lie algebras is that they are an extension of Hom-Lie algebra with one twist map $\alpha$, which is defined in \cite{4, 10}, where the identities characterizing BiHom-Lie algebras are twisted by two twist maps $\alpha, \beta$. If we consider $\alpha=\beta$ then the theory of BiHom-Lie algebras deform to Hom-Lie algebras and by putting $\alpha=\beta=id$, then we get a Lie algebra structure.

It's important to note that (linearly) compatible products have gained a significant amount of attention. In this respect, two products of a particular kind defined on the same vector space are said to be compatible if their sum also defines the same type of algebraic structure. They appeared in a variety of mathematical physics and mathematics problems. For instance, the idea of two Hom-Lie structures being compatible first arose in \cite{3}, where the author developed a generalized algebraic structure with a single commuting multiplicative linear map. This structure was referred to as compatible Hom-Lie algebra. Das recently developed a cohomology theory for the compatible Hom-Lie algebras. This cohomology is based on the characterization of a compatible Hom-Lie algebra as a Maurer-Cartan element in a bidifferential graded Lie algebra. By realizing the importance of both the BiHom Lie algebras and compatible Lie algebras, we study the compatible BiHom-Lie algebras and provide its cohomology theory. Same as Bihom Lie algebras, compatible BiHom-Lie algebras also return to compatible Hom-Lie algebras by choosing the same twist map.

The paper is organized as follows. In Section 2 (preliminary section), we recall some important definitions and notions of BiHom-Lie algebras and bidifferential graded Lie algebras. Section 3 introduces compatible BiHom-Lie algebras and their Maurer-Cartan characterizations in a suitably constructed bidifferential graded Lie algebra. We also define the notion of representation of a compatible BiHom-Lie algebra and construct the semidirect product.
The cohomology of a compatible BiHom-Lie algebra with coefficients in a representation is given in Section 4.
	\section{BiHom-Lie algebras and bidifferential graded Lie algebras }
\subsection{BiHom-Lie algebras}	 We start by recalling some important remarks about BiHom-Lie algebras defined in the introductory Section.
\begin{re}
A BiHom-Lie algebra is called a multiplicative BiHom-Lie algebra if $\alpha$ and $\beta$ are
algebra morphisms, i.e.  we have $\alpha([p,q]) = [\alpha(p), \alpha(q)], \beta([p,q]) = [\beta(p), \beta(q)]$, for any $p,q \in g$.
\end{re}
\begin{re}
A multiplicative BiHom-Lie algebra is called a regular BiHom-Lie algebra if $\alpha, \beta$ are
bijective maps.
\end{re}
\begin{re}
Obviously, a BiHom-Lie algebra $(g, [\cdot,\cdot], \alpha,\beta)$ for which $\alpha = \beta$ is just a Hom-Lie algebra $(g, [\cdot,\cdot], \alpha)$.
\end{re}
\begin{ex}
Let $(g, \mu, \alpha, \beta)$ be a BiHom-associative algebra with a bilinear map  $\mu: g\otimes g\to g$ defined by $\mu(p,q)=p\cdot q$  and bijective linear maps $\alpha, \beta:g\to g$. Define a new multiplication $[\cdot,\cdot]$ by $[p,q]= p.q -(\alpha^{-1} \beta(q)).(\alpha\beta^{-1}(p))$, for every $p, q \in g$. We say $(g, [\cdot,\cdot], \alpha, \beta)$ is a BiHom Lie algebra.
\end{ex}
\begin{ex}
	Let $(g,[\cdot,\cdot])$ be an ordinary Lie algebra over a field $\mathbb{K}$ and let $\alpha,\beta:g \rightarrow g$ two commuting linear maps such that $\alpha([p,q])=[\alpha(p),\alpha(q)]$ and $\beta([p,q])=[\beta(p),\beta(q)]$ for all $a, a^{\prime}\in g$. Define the linear maps $\{\cdot,\cdot\}:g\otimes g \longrightarrow g$ by
	$\{p,q\}= [\alpha(p),\beta(q)]$ for all $p,q \in g$. Then $g_{(\alpha,\beta)}=( g,\{\cdot,\cdot\},\alpha,\beta)$ is a BiHom-Lie algebra called the twist of Lie algebra $(g, [\cdot,\cdot])$.
\end{ex}
\begin{defn}
Let $(g, [\cdot,\cdot], \alpha,\beta)$ and $(g^{\prime}, [\cdot,\cdot]^{\prime}, \alpha^{\prime},\beta^{\prime})$ be two BiHom-Lie algebras.  A linear map $\phi : g \rightarrow g^{\prime}$ is said to be a BiHom-Lie algebra morphism if $\alpha^{\prime} \circ \phi = \phi \circ \alpha,\beta^{\prime} \circ \phi = \phi \circ \beta$ and $\phi[p, q] = [\phi(p), \phi(q)]^{\prime},$ for $p, q \in g$.
\end{defn}
\begin{ex} Given two BiHom-Lie algebras $(g, [\cdot,\cdot], \alpha,\beta)$ and $(g^{\prime}, [\cdot,\cdot]^{\prime}, \alpha^{\prime},\beta^{\prime})$, there is a BiHom-Lie algebra $(g\oplus g^{\prime}, [\cdot,\cdot ]_{g\oplus g^{\prime}}, \alpha+ \alpha^{\prime}, \beta+ \beta^{\prime})$, where the skew-symmetric bilinear map $[\cdot,\cdot]_{g\oplus g^{\prime}}: \wedge^{2}(g\oplus g^{\prime})\longrightarrow g\oplus g^{\prime}$ is given by $[(p_{1},q_{1}),(p_{2},q_{2})]_{g\oplus g^{\prime}}= ([p_{1},p_{2}],[q_{1},q_{2}]^{\prime})$ for all $p_{1}, p_{2} \in g, q_{1}, q_{2} \in g^{\prime}$ and the linear maps $(\alpha+ \alpha^{\prime}),(\beta+ \beta^{\prime}): g\oplus g^{\prime}\longrightarrow g\oplus g^{\prime}$ are given by
	$$(\alpha+\alpha^{\prime})(p, q)= (\alpha(p),\alpha^{\prime}(q)), ~~(\beta+ \beta^{\prime})(p, q)= (\beta(p),\beta^{\prime}(q)),~~ \forall p\in g, q\in g^{\prime}$$
\end{ex}
Next, we recall the graded Lie bracket (called the Nijenhuis-Richardson bracket) whose Maurer-Cartan
elements are given by the BiHom-Lie algebra structure. This generalizes the classical Nijenhuis-Richardson bracket in the context of Lie algebra. Let $g$ be a vector space and $\alpha, \beta: g \rightarrow g$  be two linear maps. For each $n \geq 0$, consider the spaces $$C^{0}_{BiHom}(g, g)= \{p \in g ~|~ \alpha(p)= p\textit{ and }\beta(p)= p\}$$ and $$C^{n}_{BiHom}(g, g) = \{f : \otimes^{n}g \rightarrow g~|~\alpha \circ f = f \circ \alpha^{\otimes^{n}}, \beta \circ f = f \circ \beta^{\otimes ^{n}}\},~~for ~~n \geq 1. $$  Then the shifted graded vector space
$C^{\ast+ 1}_{BiHom}(g, g)= \oplus_{n\geq0}C^{n+ 1}_{BiHom}(g, g)$ carries a graded Lie bracket defined as follows:\\
For $P \in C^{m+1}_{BiHom} (g, g)$ and $Q \in C^{n+1}
_{BiHom}(g,g)$, the Nijenhuis-Richardson bracket
$[P,Q]_{NR} \in C^{m+n+1}_{BiHom} (g, g)$ given by
$$[P,Q]_{NR} = P \diamond Q - (-1)^{mn} Q \diamond P,$$ where $$(P \diamond Q)(p_{1},..., p_{m+n+1}) = \sum_{\sigma \in Sh(n+1,m)} (-1)^{\sigma} P(Q(p_{\sigma(1)},..., p_{\sigma(n+1)}), \alpha\beta^{n}(p_{\sigma(n+2)}),..., \alpha\beta^{n}(p_{\sigma(n+m+1)})).$$
With this notation, we have the following.
\begin{prop}
	Let $g$ be a vector space with $\alpha,\beta: g \rightarrow g  $ being two linear maps.
	Then the BiHom-Lie bracket on $g$ is precisely the Maurer-Cartan element  in the graded Lie algebra $(C^{\ast+1}_{BiHom(g, g)}, [\cdot,\cdot]_{NR})$.
\end{prop}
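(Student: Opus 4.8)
The plan is to unwind the Maurer--Cartan equation in the graded Lie algebra $(C^{\ast+1}_{BiHom}(g,g),[\cdot,\cdot]_{NR})$ and to show it reproduces exactly the axioms of a BiHom--Lie bracket. A Maurer--Cartan element is by definition an element of degree $1$, hence a bilinear map $\mu\in C^{2}_{BiHom}(g,g)$ satisfying $[\mu,\mu]_{NR}=0$. I would first record what membership in $C^{2}_{BiHom}(g,g)$ means for $\mu(p,q):=[p,q]$: the conditions $\alpha\circ\mu=\mu\circ\alpha^{\otimes2}$ and $\beta\circ\mu=\mu\circ\beta^{\otimes2}$ are precisely $\alpha[p,q]=[\alpha(p),\alpha(q)]$ and $\beta[p,q]=[\beta(p),\beta(q)]$, i.e. the multiplicativity of $\alpha$ and $\beta$, while the commutativity $\alpha\beta=\beta\alpha$ is part of the standing data. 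Thus $\mu\in C^{2}_{BiHom}(g,g)$ already encodes the structural axioms, and it remains to see that the Maurer--Cartan equation encodes the BiHom--Jacobi identity.

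Next I would compute the bracket in bidegree $(1,1)$. Since $m=n=1$, the sign is $(-1)^{mn}=-1$ and $[\mu,\mu]_{NR}=\mu\diamond\mu-(-1)^{1}\mu\diamond\mu=2\,\mu\diamond\mu$, so (over a field in which $2$ is invertible) the Maurer--Cartan equation is equivalent to $\mu\diamond\mu=0$. Expanding $\mu\diamond\mu$ over the three $(2,1)$-shuffles of $\{1,2,3\}$, whose signs are $+,-,+$, gives
\begin{align*}
(\mu\diamond\mu)(p_{1},p_{2},p_{3})&=[[p_{1},p_{2}],\alpha\beta(p_{3})]-[[p_{1},p_{3}],\alpha\beta(p_{2})]+[[p_{2},p_{3}],\alpha\beta(p_{1})].
\end{align*}
I would then show that the vanishing of this expression for all $p_{1},p_{2},p_{3}$ is equivalent to the BiHom--Jacobi condition. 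This is the step where the twisted skew--symmetry $[\beta(p),\alpha(q)]=-[\beta(q),\alpha(p)]$ and the multiplicativity of $\alpha,\beta$ are used to move the twist maps onto the appropriate arguments and to reorganize the three nested brackets into the cyclic sum $[\beta^{2}(p),[\beta(q),\alpha(r)]]+\circlearrowleft$.

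For the converse I would run the same identities backwards: given a (multiplicative) BiHom--Lie bracket, the map $\mu$ lies in $C^{2}_{BiHom}(g,g)$ by multiplicativity and is skew--symmetric by hypothesis, and the BiHom--Jacobi identity forces $\mu\diamond\mu=0$, hence $[\mu,\mu]_{NR}=0$. Since each implication is reversible, this yields the claimed bijection between the skew--symmetric Maurer--Cartan elements of $C^{2}_{BiHom}(g,g)$ and the BiHom--Lie brackets on $g$. I would also flag explicitly that, because the cochains in $C^{2}_{BiHom}(g,g)$ are a priori arbitrary (not alternating) bilinear maps, skew--symmetry must be retained as part of the data defining the relevant Maurer--Cartan elements; the equation $[\mu,\mu]_{NR}=0$ alone captures only the Jacobi--type identity.

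The main obstacle I anticipate is purely bookkeeping but genuinely delicate: matching the three untwisted--inner--bracket terms produced by the shuffle expansion, each carrying a single $\alpha\beta$ twist on the remaining argument, with the three terms of the BiHom--Jacobi identity, which carry the asymmetric twists $\beta^{2},\beta,\alpha$. Getting this alignment right requires applying twisted skew--symmetry to each nested bracket in the correct slot and invoking multiplicativity of $\alpha$ and $\beta$ to absorb and redistribute the twist maps; it is precisely here that the commutation $\alpha\beta=\beta\alpha$ is essential, so that the rearranged twists on corresponding terms agree.
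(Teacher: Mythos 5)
The paper states this proposition without any proof at all (it is presented as a recalled fact), so there is no argument of the authors' to compare yours against; I can only assess your plan on its own terms. Your setup is the standard one and the computational core is right: a degree-one element is a $\mu\in C^{2}_{BiHom}(g,g)$, membership in that space encodes multiplicativity of $\alpha$ and $\beta$, $[\mu,\mu]_{NR}=2\,\mu\diamond\mu$, and the expansion of $\mu\diamond\mu$ over the three $(2,1)$-shuffles with signs $+,-,+$ is exactly
$[[p_{1},p_{2}],\alpha\beta(p_{3})]-[[p_{1},p_{3}],\alpha\beta(p_{2})]+[[p_{2},p_{3}],\alpha\beta(p_{1})]$.
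Your observation that skew-symmetry is not detected by the Maurer--Cartan equation and must be carried separately as part of the data is also a genuine and necessary caveat that the paper itself glosses over.

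The gap is in the step you defer as ``delicate bookkeeping'': the equivalence of $\mu\diamond\mu=0$ with the BiHom--Jacobi identity is the entire content of the proposition, and it does not go through under the stated hypotheses. To convert $[[p_{1},p_{2}],\alpha\beta(p_{3})]$ into a term of the form $[\beta^{2}(r),[\beta(p),\alpha(q)]]$ you must first apply the twisted skew-symmetry $[\beta(u),\alpha(v)]=-[\beta(v),\alpha(u)]$ to the \emph{outer} bracket, which forces you to exhibit $[p_{1},p_{2}]$ as $\beta(u)$ for some $u$, and then to rewrite $\alpha(u)$ as an inner bracket of the shape $[\beta(\cdot),\alpha(\cdot)]$; both steps require $\alpha$ and $\beta$ to be bijective (the regular case), since for general linear maps the two identities quantify over different sets of elements ($\mu\diamond\mu=0$ over all of $g^{\otimes 3}$, BiHom--Jacobi only over images of $\beta^{2},\beta,\alpha$ in prescribed slots) and neither implies the other. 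In the Hom-Lie model you are imitating this issue is invisible because skew-symmetry there is untwisted and $[[p_1,p_3],\alpha(p_2)]=-[[p_3,p_1],\alpha(p_2)]$ immediately realigns the shuffle expansion with the cyclic Jacobi sum; in the BiHom setting no such one-line realignment exists. To repair the proof you should either add regularity of $\alpha,\beta$ as a hypothesis and carry out the substitution $p_{1}=\alpha^{-1}\beta^{2}(p)$, $p_{2}=\beta(q)$, $p_{3}=r$ explicitly in all three terms (checking that the resulting twists actually match, which is where $\alpha\beta=\beta\alpha$ enters), or restate the proposition so that the Maurer--Cartan condition is taken as the definition of the BiHom--Jacobi identity in this operadic normalization.
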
In the following, we recall the Chevalley-Eilenberg cohomology of a BiHom-Lie algebra $(g, [. , .], \alpha, \beta)$ with
coefficients in a representation.
\begin{defn}
	Let $(g, [\cdot, \cdot], \alpha, \beta)$ be a BiHom-Lie algebra. A representation of $g$ is a
	$4$-tuple $(V, \bullet, \alpha_{V}, \beta_{V})$, where $V$ is a linear space, $\alpha_{V}, \beta_{V} : V \rightarrow V$ are two commuting
	linear maps and representation $\bullet: g \otimes V \longrightarrow V$ defined by $(p, v)\longmapsto p \bullet v$ 	is a bilinear operation (called the action) satisfying:
\begin{enumerate}
	\item $\alpha(p) \bullet \alpha_{V}(v)=\alpha_{V}(p \bullet v)$,
	\item$\beta(p) \bullet \beta_{V}(v)=\beta_{V}(p \bullet v)$,
	\item$[\beta(p),q]\bullet \beta_{V}(v)= \alpha\beta(p)\bullet(q\bullet v)- \beta(y)\bullet(\alpha(p)\bullet v)$,
\end{enumerate} for all $p, q \in g$ and $v\in V$.
\end{defn}
\begin{ex}
	Let $(g, [\cdot, \cdot], \alpha, \beta)$ be a BiHom-Lie algebra and $(V, \bullet, \alpha_{V}, \beta_{V})$ be a representation of $g$. Assume that the maps $\alpha, \beta$ and $\alpha_{V}, \beta_{V}$ are bijective. Then the semi direct products $g\ltimes V= (g\oplus V, [.,.], \alpha\oplus\alpha_{V}, \beta\oplus\beta_{V})$ is a BiHom-Lie algebra. Where $\alpha\oplus\alpha_{V},\beta\oplus\beta_{V}:g\oplus V \longrightarrow g\oplus V$ are defined by
	$$(\alpha\oplus\alpha_{V})(p,a)=(\alpha(p),\alpha_{V}(a)), (\beta\oplus\beta_{V})(p,a)=(\beta(x),\beta_{V}(a))$$ and the bracket $[\cdot, \cdot]$ is defined by $$[(p,a),(q,b)]=([p,q], p\bullet b-  \alpha^{-1}\beta(y) \bullet  \alpha_{V}\beta^{-1}_{V}(a))$$
	for all $p, q \in g$ and $a,b \in V$.
\end{ex}It follows that any BiHom-Lie algebra $(g, [\cdot,\cdot], \alpha,\beta)$ is a representation of itself with the action given by the bracket $[. , .]$. This is called the adjoint representation. Let $(g, [. , .], \alpha,\beta)$ be a BiHom-Lie algebra and $(V,\bullet,\alpha_{V},\beta_{V})$ be a representation on it. For each $n \geq 0$, we define the $n$-th cochain on $g$ with the coefficients in the representation $(V, \bullet, \alpha,\beta)$  which is the set of skew symmetric $n$-linear maps from $g\otimes g \otimes \cdots\otimes g$ ($n$-times) to $V,$ denote by $C^{n}(g, V)$. More specifically $$C^{n}(g, V ) = \{f ~|~f : \otimes^{n}g \longrightarrow V ~\textit{ is a multilinear map}\}.$$
A $n$-BiHom-cochain on $g$ with the coefficients in $V$ is defined to be a $n$-cochain $f \in C^{n}(g,V)$ such that is compatible with $\alpha,\beta$ and $\alpha_{V}, \beta_{V}$ in the sense that $\alpha_{V}\circ f= f\circ\alpha$ and $\beta_{V}\circ f=f\circ\beta$ i.e$$\alpha_{V}(f(p_{1},...,p_{n}))= f(\alpha(p_{1}),...,\alpha(p_{n})),~~\beta_{V}(f(p_{1},...,p_{n}))= f(\beta(p_{1}),...,\beta(p_{n}))$$
denoted by
$$C^{0}_{BiHom}(g,V)=\{v \in V | \alpha_{V}(v)=v , \beta_{V}(v)=v\}$$and
	$$C^{n} _{BiHom}(g,V)=\{f \in C^{n}(g,V) | \alpha_{V}\circ f=f\circ\alpha \textit{ and } \beta_{V}\circ f=f\circ\beta\}$$
The coboundary operator $\delta_{BiHom}: C^{n}_{BiHom}(g,V )\longrightarrow C^{n+1}_{ BiHom}(g,V )$, for $n \geq 0$ is given by
\begin{enumerate}\item
\begin{equation*} \delta_{BiHom} (p)(v)= \alpha\beta^{-1}(p) \bullet v,~~  for  ~~v \in C^{0}_{BiHom}(g,V )~~ and~~ p \in g.
\end{equation*} \item\begin{eqnarray*}&\delta_{BiHom}f (p_{1},\cdots, p_{n+1}) \\=&\sum_{i=1}^{n+1}(-1)^{i} \alpha\beta^{ n-1}(p_{i})\bullet f(p_{1},\cdots, \hat{p_{i}},\cdots,p_{n+1}) \\&+\sum_{1\leq i<j\leq n+1}(-1)^{i+j+1}f([\alpha^{-1}\beta(p_{i}),p_{j}],\beta(p_{1}),\cdots,\hat{\beta(p_{i})},\cdots,\hat{\beta(p_{j})},\cdots,\beta(p_{n+1}))\end{eqnarray*}
\end{enumerate}for $f \in C^{n}_{BiHom}(g, V )$ and $p_{1},\cdots, p_{n+1} \in g$. The cohomology groups of the cochain complex $$\{C^{\ast}_{BiHom}(g,V ),\delta_{BiHom}\}$$ are called the Chevalley-Eilenberg cohomology groups, denoted by $H^{\ast}_{BiHom}(g, V)$.
It is important to note that the coboundary operator for the Chevalley-Eilenberg cohomology of the
BiHom-Lie algebra $(g, [\cdot,\cdot], \alpha, \beta)$ with coefficients in itself is simply given by 
$$\delta_{BiHom}f = (-1)^{n-1}[\mu, f]_{NR} , ~~\forall ~f \in C^{n}_{BiHom}(g, g),$$
where $\mu \in C^{2}_{BiHom}(g,g)$ corresponds to the BiHom-Lie bracket $[\cdot,\cdot]$.
\subsection{Bidifferential graded Lie algebras}
Before presenting the notion of bidifferential graded Lie algebras, let us first give the definition of a differential graded Lie algebra.
\begin{defn}A differential graded Lie algebra is a triple $(L = \oplus L^{i}, [\cdot , \cdot], d)$ consisting of a graded Lie algebra together with a differential $d : L \longrightarrow L$ of degree $+1$ which is a derivation for the bracket $[\cdot ,\cdot ]$. An element $\theta \in L^{1}$ is said to be a Maurer-Cartan element in the differential graded Lie algebra $(L, [\cdot,\cdot ], d)$, if $\theta$ satisfies Maurer-Cartan equation: $$d\theta + \frac{1}{2}[\theta, \theta] = 0.$$
\end{defn}\begin{defn}
A bidifferential graded Lie algebra is a quadruple $(L = \oplus L^{i}, [\cdot , \cdot], d_{1}, d_{2})$ in which the triples $(L, [\cdot , \cdot], d_{1})$ and $(L, [\cdot , \cdot], d_{2})$ are differential graded Lie algebras additionally satisfying $d_{1} \circ d_{2} + d_{2} \circ d_{1} = 0$.
\end{defn}\begin{re}
Any graded Lie algebra can be considered as a bidifferential graded Lie algebra with both
the differentials $d_{1}$ and $d_{2}$ to be trivial.
\end{re}\begin{defn}
Let $(L, [\cdot , \cdot], d_{1}, d_{2})$ be a bidifferential graded Lie algebra. A pair of elements $(\theta_{1}, \theta_{2}) \in L^{1}  \oplus L^{1}$ is said to be a Maurer-Cartan element if
\begin{enumerate}
	\item   $ \theta_{1}$ is a Maurer-Cartan element in the differential graded Lie algebra $(L, [ , ], d_{1});$
\item $\theta_{2}$ is a Maurer-Cartan element in the differential graded Lie algebra $(L, [ , ], d_{2});$
	\item   the following compatibility condition holds $$d_{1}\theta_{2} + d_{2}\theta_{1} + [\theta_{1}, \theta_{2}] = 0.$$
\end{enumerate}Like a differential graded Lie algebra can be twisted by a Maurer-Cartan element, the same result holds for bidifferential graded Lie algebras.
\end{defn}\begin{prop}
Let $(L, [\cdot , \cdot], d_{1}, d_{2})$ be a bidifferential graded Lie algebra and let $(\theta_{1}, \theta_{2})$ be a Maurer-Cartan element.
Then the quadruple $(L, [\cdot , \cdot], d_{1}^{\theta_{1}}, d_{2}^{\theta_{2}})$ is a bidifferential graded Lie algebra, where
$$d_{1}^{\theta_{1}} = d_{1} + [\theta_{1},-]~~and ~~d_{2}^{\theta_{2}} = d_{2} + [\theta_{2},-].$$
For any $v_{1}, v_{2} \in L^{1}$, the pair $(\theta_{1} + v_{1}, \theta_{2} + v_{2})$ is a Maurer-Cartan element in the bidifferential graded Lie algebra $(L, [\cdot , \cdot], d_{1}, d_{2})$ if and only if $(v_{1}, v_{2})$ is a Maurer-Cartan element in the bidifferential graded Lie algebra $(L, [\cdot , \cdot], d_{1}^{\theta_{1}} , d_{2}^{\theta_{2}} )$.
\end{prop}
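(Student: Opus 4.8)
The plan is to establish the two assertions separately. For the first, I would verify that $(L,[\cdot,\cdot],d_{1}^{\theta_{1}})$ and $(L,[\cdot,\cdot],d_{2}^{\theta_{2}})$ are each differential graded Lie algebras and that the twisted differentials anticommute. Since $\theta_{1},\theta_{2}\in L^{1}$, each inner map $[\theta_{i},-]$ is a derivation of degree $+1$ for the bracket; being the sum of the derivation $d_{i}$ and this inner derivation, $d_{i}^{\theta_{i}}$ is again a degree $+1$ derivation. The only substantive point is that it squares to zero.

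For the squaring I would expand $(d_{i}^{\theta_{i}})^{2} = d_{i}^{2} + d_{i}[\theta_{i},-] + [\theta_{i},d_{i}(-)] + [\theta_{i},[\theta_{i},-]]$. Here $d_{i}^{2}=0$; the graded Leibniz rule for $d_{i}$ together with $|\theta_{i}|=1$ gives $d_{i}[\theta_{i},x] + [\theta_{i},d_{i}x] = [d_{i}\theta_{i},x]$; and the graded Jacobi identity (using $(-1)^{|\theta_{i}||\theta_{i}|}=-1$) gives $[\theta_{i},[\theta_{i},x]] = \tfrac{1}{2}[[\theta_{i},\theta_{i}],x]$. Collecting these, $(d_{i}^{\theta_{i}})^{2}x = [\,d_{i}\theta_{i}+\tfrac{1}{2}[\theta_{i},\theta_{i}]\,,\,x\,]$, which vanishes precisely because $\theta_{i}$ is a Maurer-Cartan element of $(L,[\cdot,\cdot],d_{i})$. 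The same bookkeeping applied to the cross terms of $d_{1}^{\theta_{1}}d_{2}^{\theta_{2}} + d_{2}^{\theta_{2}}d_{1}^{\theta_{1}}$ reduces it, using $d_{1}d_{2}+d_{2}d_{1}=0$ and the graded Jacobi identity, to $[\,d_{1}\theta_{2}+d_{2}\theta_{1}+[\theta_{1},\theta_{2}]\,,\,-\,]$, which vanishes by the compatibility condition for $(\theta_{1},\theta_{2})$. This establishes the first assertion.

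For the equivalence I would expand each of the three defining conditions for $(\theta_{1}+v_{1},\theta_{2}+v_{2})$ to be a Maurer-Cartan element of $(L,[\cdot,\cdot],d_{1},d_{2})$ and isolate the terms depending only on $\theta_{1},\theta_{2}$. In the first condition, bilinearity together with the graded symmetry $[\theta_{1},v_{1}]=[v_{1},\theta_{1}]$ (both of degree $1$) yields $d_{1}\theta_{1}+\tfrac{1}{2}[\theta_{1},\theta_{1}] + d_{1}v_{1}+[\theta_{1},v_{1}]+\tfrac{1}{2}[v_{1},v_{1}]$; the first two terms cancel by the Maurer-Cartan equation for $\theta_{1}$, leaving exactly $d_{1}^{\theta_{1}}v_{1}+\tfrac{1}{2}[v_{1},v_{1}]=0$, the Maurer-Cartan equation for $v_{1}$ in the twisted algebra. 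The second condition is identical with indices $2$. In the third condition the analogous expansion produces the $\theta$-only block $d_{1}\theta_{2}+d_{2}\theta_{1}+[\theta_{1},\theta_{2}]$, which cancels by compatibility of $(\theta_{1},\theta_{2})$, while the remaining terms $d_{1}v_{2}+d_{2}v_{1}+[\theta_{1},v_{2}]+[\theta_{2},v_{1}]+[v_{1},v_{2}]$ (again using $[v_{1},\theta_{2}]=[\theta_{2},v_{1}]$) regroup as $d_{1}^{\theta_{1}}v_{2}+d_{2}^{\theta_{2}}v_{1}+[v_{1},v_{2}]=0$, the compatibility condition for $(v_{1},v_{2})$ in the twisted algebra. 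Since each of the three conditions is equivalent term by term, the stated equivalence follows.

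The main obstacle is simply the careful sign bookkeeping in the degree-$+1$ graded setting: one must consistently apply the graded Leibniz rule $d_{i}[\theta_{i},x]=[d_{i}\theta_{i},x]-[\theta_{i},d_{i}x]$ and the graded Jacobi identity with the signs dictated by $|\theta_{i}|=1$. Once these signs are controlled, every step follows directly from the Maurer-Cartan equations and the compatibility condition already assumed for $(\theta_{1},\theta_{2})$, so no genuinely new input is required beyond the standard graded-algebra identities.
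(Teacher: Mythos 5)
Your argument is correct and complete: the expansion of $(d_i^{\theta_i})^2$ via the graded Leibniz rule and the identity $[\theta_i,[\theta_i,x]]=\tfrac12[[\theta_i,\theta_i],x]$, the reduction of the cross terms to $[\,d_1\theta_2+d_2\theta_1+[\theta_1,\theta_2],-\,]$, and the term-by-term matching of the three Maurer--Cartan conditions for $(\theta_1+v_1,\theta_2+v_2)$ against those for $(v_1,v_2)$ in the twisted algebra all check out, including the signs coming from $|\theta_i|=|v_i|=1$. The paper states this proposition without any proof (it is recalled from the literature on compatible Hom-Lie algebras), so there is nothing to compare against; your write-up is the standard argument and would serve as a valid proof of the statement as given.
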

\section{Compatible BiHom-Lie algebras}
In this section, we introduce compatible Bihom-Lie algebras and give a Maurer-Cartan characterization. We end this section by defining representations of compatible Bihom-Lie algebras. Let $g$ be a vector space and $\alpha , \beta: g \rightarrow g$  be two linear commuting maps.
\begin{defn}
Two BiHom-Lie algebras $(g, [\cdot ,\cdot ]_{1},\alpha,\beta)$ and $(g, [\cdot , \cdot]_{2},\alpha,\beta)$ are said to be compatible if for all
$\lambda,\eta \in \mathbb{K}$, the $4$-tuple $(g,\lambda[\cdot,\cdot]_{1} + \eta[\cdot,\cdot]_{2},\alpha,\beta)$ is a BiHom-Lie algebra.
The compatibility condition in this definition is equivalent to the following:
\begin{eqnarray*}
&[\beta^{2}(p), [\beta(q), \alpha(r)]_{1}]_{2} + [\beta^{2}(q), [\beta(r), \alpha(p)]_{1}]_{2} + [\beta^{2}(r), [\beta(p), \alpha(q)]_{1}]_{2}\\& + [\beta^{2}(p), [\beta(q), \alpha(r)]_{2}]_{1} + [\beta^{2}(q), [\beta(r), \alpha(p)]_{2}]_{1} + [\beta^{2}(r), [\beta(p), \alpha(q)]_{2}]_{1} = 0
\end{eqnarray*} for all $p, q, r \in g$.
\end{defn}\begin{defn}
A compatible BiHom-Lie algebra is a $5$-tuple $(g, [\cdot ,\cdot ]_{1}, [\cdot ,\cdot ]_{2}, \alpha,\beta)$ in which $(g, [\cdot ,\cdot ]_{1},\alpha,\beta)$ and $(g, [\cdot , \cdot]_{2},\alpha,\beta)$ are both BiHom-Lie algebras and are compatible.
In this case, we say that the pair $([\cdot ,\cdot ]_{1}, [\cdot , \cdot]_{2})$ is a compatible BiHom-Lie algebra structure on $g$ when the
twisting maps $\alpha$ and $\beta$ are clear from the context.\\ Compatible BiHom-Lie algebras are twisted version of compatible Lie algebras.
Recall that a compatible Lie algebra is a triple $(g, [\cdot ,\cdot ]_{1}, [\cdot ,\cdot ]_{2})$ in which $(g, [\cdot ,\cdot ]_{1})$ and $(g, [\cdot,\cdot]_{2})$ are Lie algebras and are compatible in the sense that $\lambda[\cdot ,\cdot ]_{1} +\eta[\cdot , \cdot]_{2}$ is a Lie bracket on $g$, for all $\lambda, \eta \in \mathbb{K}$. Thus, a compatible BiHom-Lie algebra $(g, [\cdot ,\cdot ]_{1}, [\cdot ,\cdot ]_{2},\alpha,\beta)$ with $\alpha =  \beta = id$ is nothing but a compatible Lie algebra.
\end{defn}
\begin{defn}
Let $(g, [\cdot ,\cdot ]_{1}, [\cdot ,\cdot ]_{2}, \alpha,\beta)$ and $(g^{\prime}, [\cdot, \cdot ]_{1}^{\prime}
,[\cdot ,\cdot ]_{2}^{\prime}, \alpha^{\prime},\beta^{\prime})$ be two compatible BiHom-Lie algebras. A morphism between them is a linear map $\phi : g\rightarrow g^{\prime}$ which is a BiHom-Lie algebra morphism from $(g, [\cdot,\cdot]_{1}, \alpha,\beta)$
to $(g^{\prime}, [\cdot ,\cdot ]_{1}^{\prime},\alpha^{\prime},\beta^{\prime})$, and a BiHom-Lie algebra morphism from $(g, [\cdot ,\cdot ]_{2}, \alpha,\beta)$
to $(g^{\prime}, {[\cdot ,\cdot ]}_{2}^{\prime}, \alpha^{\prime}, \beta^{\prime}).$
\end{defn} 
\begin{ex} Let $(g, [\cdot , \cdot],\alpha,\beta)$ be a BiHom-Lie algebra. A linear operator $N: g\longrightarrow g$ is called a BiHom-Nijenhuis operator if $[N(p), N(q)]= N([ N(p), q]- [ N(q), p]- N([p, q]))$, for all $p, q \in g$. Then there is a deformed BiHom-Lie Bracket on $g$ given by $$[p, q]_{N}= [N(p), q]- [ N(q), p]- N([p, q])$$
In the other words $(g,[\cdot , \cdot]_{N},\alpha,\beta)$ is a BiHom-Lie algebra.
It is easy to see that the 5-tuple $(g,[\cdot , \cdot],[\cdot , \cdot]_{N},\alpha,\beta)$ is a compatible BiHom-Lie algebra.
\end{ex}
\begin{defn}
Let $(g, [\cdot , \cdot], \alpha,\beta)$ be a BiHom-Lie algebra and $s,l$ be a non-negative integer, $\lambda \in \mathbb{K}$. If A linear operator $R : g \longrightarrow g$ satisfying\begin{align*}
	\alpha \circ R &= R \circ \alpha\\
	\beta \circ R &= R \circ \beta\\
	[R(p),R(q)] &= R([\alpha^{s}\beta^{l}R(p),q] + [p,\alpha^{s}\beta^{l}R(q)] +\lambda[p,q])
	\end{align*} for all $p, q \in g$, then $R$ is called an $sl$-Rota-Baxter operator of weight $\lambda$ on $(g, [\cdot , \cdot], \alpha, \beta)$
\end{defn}
A $sl$-Rota-Baxter operator $R$ induces a new BiHom-Lie algebra structure on $g$ with the BiHom-Lie Bracket
\begin{center}
$[p,q]_{R} = [\alpha^{s}\beta^{l}R(p),q] + [p,\alpha^{s}\beta^{l}R(q)] +\lambda[p,q]$
\end{center}\begin{defn} Two $sl$-Rota-Baxter operators $R$ and $S$ of same $\lambda \in \mathbb{K}$ on a BiHom-Lie algebra $(g, [\cdot , \cdot],\alpha,\beta)$ are said to be compatible if
$$[R(p),S(q)]+[S(p),R(q)]=R([\alpha^{s}\beta^{l}S(p),q]+[p,\alpha^{s}\beta^{l}S(q)])+ S([\alpha^{s}\beta^{l}R(p),q]+[p,\alpha^{s}\beta^{l}R(q)]).$$
\end{defn}
\begin{prop}
Let $R$ and $S$ be two compatible $sl$-Rota-Baxter operators of weight $\lambda \in \mathbb{K}$ on a BiHom-Lie algebra $(g,[\cdot , \cdot],\alpha,\beta)$, Then $(g,[\cdot , \cdot]_{R},[\cdot , \cdot]_{S},\alpha,\beta)$ is a compatible BiHom-Lie algebra.
\end{prop}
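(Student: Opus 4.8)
The plan is to check the three defining conditions of a compatible BiHom-Lie algebra for the pair $([\cdot,\cdot]_R,[\cdot,\cdot]_S)$. The first two conditions, that $(g,[\cdot,\cdot]_R,\alpha,\beta)$ and $(g,[\cdot,\cdot]_S,\alpha,\beta)$ are BiHom-Lie algebras, are already recorded in the excerpt immediately after the definition of an $sl$-Rota-Baxter operator, so only the compatibility of the two brackets is left to establish. Throughout I would abbreviate $\{p,q\}_R := [\alpha^{s}\beta^{l}R(p),q]+[p,\alpha^{s}\beta^{l}R(q)]$ and likewise $\{p,q\}_S$, so that $[p,q]_R=\{p,q\}_R+\lambda[p,q]$ and $[p,q]_S=\{p,q\}_S+\lambda[p,q]$, and I would note at once the bilinear identity $\{p,q\}_R+\{p,q\}_S=\{p,q\}_{R+S}$.

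Next I would reduce the problem to a single identity. Skew-symmetry of $a[\cdot,\cdot]_R+b[\cdot,\cdot]_S$ is immediate for all $a,b\in\mathbb{K}$, and since the BiHom-Jacobiator is quadratic in the bracket while $[\cdot,\cdot]_R$ and $[\cdot,\cdot]_S$ already satisfy it, its value on $a[\cdot,\cdot]_R+b[\cdot,\cdot]_S$ equals $ab$ times the symmetric polarized cross-term. Hence $a[\cdot,\cdot]_R+b[\cdot,\cdot]_S$ is a BiHom-Lie bracket for all $a,b$ if and only if the single mixed condition holds, that is, if and only if $[\cdot,\cdot]_R+[\cdot,\cdot]_S$ satisfies the BiHom-Jacobi identity. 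This mixed condition is precisely the equivalent compatibility condition displayed in the definition of compatible BiHom-Lie algebras, so the whole proposition collapses to verifying it.

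The engine of the verification is the observation that $R+S$ is itself an $sl$-Rota-Baxter operator of weight $\lambda$. To prove this I would expand $[(R+S)(p),(R+S)(q)]$ into $[R(p),R(q)]+\big([R(p),S(q)]+[S(p),R(q)]\big)+[S(p),S(q)]$, apply the Rota-Baxter axioms for $R$ and $S$ to the two extreme terms and the compatibility identity of $R$ and $S$ to the middle term, and then recombine using $\{p,q\}_R+\{p,q\}_S=\{p,q\}_{R+S}$ and the linearity of $R+S$. The result is $[(R+S)(p),(R+S)(q)]=(R+S)\big(\{p,q\}_{R+S}+\lambda[p,q]\big)$, which is exactly the Rota-Baxter axiom for $R+S$; the relations $\alpha\circ(R+S)=(R+S)\circ\alpha$ and $\beta\circ(R+S)=(R+S)\circ\beta$ are inherited from $R$ and $S$. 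Consequently $(g,[\cdot,\cdot]_{R+S},\alpha,\beta)$ is a BiHom-Lie algebra, so $[\cdot,\cdot]_{R+S}$ obeys the BiHom-Jacobi identity.

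Finally I would compare brackets: $[p,q]_R+[p,q]_S=\{p,q\}_{R+S}+2\lambda[p,q]=[p,q]_{R+S}+\lambda[p,q]$. When $\lambda=0$ this reads $[\cdot,\cdot]_R+[\cdot,\cdot]_S=[\cdot,\cdot]_{R+S}$, which is already a BiHom-Lie bracket by the previous paragraph, and the polarization step then finishes the proof immediately. For general $\lambda$ the mixed BiHom-Jacobi identity for $[\cdot,\cdot]_R+[\cdot,\cdot]_S$ becomes the Jacobiator of $[\cdot,\cdot]_{R+S}+\lambda[\cdot,\cdot]$, whose pure parts vanish because $[\cdot,\cdot]_{R+S}$ and $[\cdot,\cdot]$ each satisfy BiHom-Jacobi, leaving the cross terms to be controlled by the Rota-Baxter axiom for $R+S$ together with the commutation of $R,S$ with the twists $\alpha^{s}\beta^{l}$, $\beta^{2}$, $\beta$, $\alpha$. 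I expect this last expansion to be the main obstacle: the bookkeeping of the twisted mixed BiHom-Jacobiator is lengthy, and the delicate point is the interaction of the weight-$\lambda$ contributions, which do not cancel through the Rota-Baxter axioms as transparently as the weight-free part and must be tracked term by term.
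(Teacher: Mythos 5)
Your positive steps are all correct: the polarization reduction (compatibility of the two brackets is equivalent to the single mixed BiHom-Jacobi identity, i.e.\ to $[\cdot,\cdot]_R+[\cdot,\cdot]_S$ being a BiHom-Lie bracket), the verification that $R+S$ is again an $sl$-Rota-Baxter operator of weight $\lambda$ under the paper's compatibility identity, and the identity $[\cdot,\cdot]_R+[\cdot,\cdot]_S=[\cdot,\cdot]_{R+S}+\lambda[\cdot,\cdot]$; together these do prove the proposition when $\lambda=0$. (For what it is worth, the paper states this proposition with no proof at all, so there is nothing to compare your argument against.) The gap is precisely the step you deferred, and it cannot be closed by bookkeeping: the cross term between $[\cdot,\cdot]_{R+S}$ and $\lambda[\cdot,\cdot]$ is a genuine obstruction. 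Already in the untwisted case $\alpha=\beta=\mathrm{id}$ (which is a special case of the BiHom setting, for any $s,l$), if one expands the mixed Jacobiator of $[\cdot,\cdot]_T$ and $[\cdot,\cdot]$ for an arbitrary linear map $T$, every term in which $T$ sits on an outer entry cancels via the Jacobi identity of $[\cdot,\cdot]$, and what survives is $\sum_{\mathrm{cyc}}[p,T([q,r])]$. Hence the compatibility defect of the pair $([\cdot,\cdot]_R,[\cdot,\cdot]_S)$ equals $\lambda\sum_{\mathrm{cyc}}[p,(R+S)([q,r])]$, over which the Rota-Baxter identity for $R+S$ gives no control whatsoever.

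In fact the statement itself, with the paper's definition of compatible Rota-Baxter operators (whose right-hand side carries no $\lambda$-terms), is false for $\lambda\neq0$, so no amount of term-tracking can rescue your last step. Take $g=sl_2$ with $[h,e]=2e$, $[h,f]=-2f$, $[e,f]=h$, take $\alpha=\beta=\mathrm{id}$, $\lambda=-1$, let $R$ be the projection onto the Borel subalgebra spanned by $e,h$ along the line spanned by $f$ (the standard weight $-1$ Rota-Baxter operator coming from a decomposition into two subalgebras), and let $S=0$, which is also a Rota-Baxter operator of weight $-1$. The paper's compatibility identity for $R$ and $S$ holds trivially, both sides being zero. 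Now $[p,q]_S=-[p,q]$, while $[e,f]_R=[f,e]_R=[f,h]_R=[h,h]_R=0$ and $[h,e]_R=2e$. Evaluating the paper's six-term compatibility condition for the brackets at $(p,q,r)=(e,f,h)$, five terms vanish and the remaining one is $[f,[h,e]_R]_S=-2[f,e]=2h\neq0$; equivalently, $[\cdot,\cdot]_R+[\cdot,\cdot]_S$ fails the Jacobi identity on $(e,f,h)$. So $(g,[\cdot,\cdot]_R,[\cdot,\cdot]_S,\alpha,\beta)$ is not a compatible (BiHom-)Lie algebra. The proposition becomes true for every $\lambda$ --- and your argument then closes immediately --- if one strengthens the definition of compatibility of $R$ and $S$ to include the weight terms, namely $[R(p),S(q)]+[S(p),R(q)]=R([p,q]_S)+S([p,q]_R)$ with the full deformed brackets on the right; with that hypothesis your computation shows $R+S$ is an $sl$-Rota-Baxter operator of weight $2\lambda$ whose induced bracket is exactly $[\cdot,\cdot]_R+[\cdot,\cdot]_S$, and your polarization step finishes the proof.
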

Let $g$ be a vector space and $\alpha, \beta : g \longrightarrow g$ be a linear map. Consider the graded Lie algebra $(C^{\ast+ 1}_{BiHom}(g, g), [\cdot , \cdot]_{NR})$ the quadruple $$(C^{\ast+1} _{BiHom}(g, g), [\cdot , \cdot]_{NR}, d_{1} = 0, d_{2} = 0)$$ is a bidifferential graded Lie algebra. Then we have the following Maurer-Cartan characterization of compatible BiHom-Lie algebras.
\begin{thm}
	There is a one-to-one correspondence between compatible BiHom-Lie algebra structures on $g$
	and Maurer-Cartan elements in the bidifferential graded Lie algebra
	$(C^{\ast+1} _{BiHom}(g,g), [\cdot , \cdot]_{NR}, d_{1} = 0, d_{2} = 0)$.
\end{thm}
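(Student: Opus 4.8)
The plan is to unwind the definition of a Maurer-Cartan element of the bidifferential graded Lie algebra $(C^{\ast+1}_{BiHom}(g,g), [\cdot,\cdot]_{NR}, d_1=0, d_2=0)$ and match the resulting equations with the axioms of a compatible BiHom-Lie algebra. Since the degree-one part of the shifted complex is $L^1 = C^2_{BiHom}(g,g)$, a Maurer-Cartan element is a pair $(\mu_1,\mu_2)$ with $\mu_1,\mu_2\in C^2_{BiHom}(g,g)$. Because both differentials are zero, the three defining conditions of a Maurer-Cartan element in a bidifferential graded Lie algebra collapse, via $d\theta+\tfrac12[\theta,\theta]=0$ and $d_1\theta_2+d_2\theta_1+[\theta_1,\theta_2]=0$, to
\begin{equation*}
[\mu_1,\mu_1]_{NR}=0,\qquad [\mu_2,\mu_2]_{NR}=0,\qquad [\mu_1,\mu_2]_{NR}=0.
\end{equation*}
By the Maurer-Cartan characterization of a single BiHom-Lie bracket recalled in Section 2, the first two equations say exactly that $\mu_1=[\cdot,\cdot]_1$ and $\mu_2=[\cdot,\cdot]_2$ are BiHom-Lie brackets on $g$ relative to the commuting maps $\alpha,\beta$ (the membership $\mu_i\in C^2_{BiHom}(g,g)$ encodes multiplicativity). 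What remains is to identify the cross term $[\mu_1,\mu_2]_{NR}=0$ with compatibility.

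For this I would argue by polarization rather than by a head-on expansion. Writing $m=n=1$ for the degrees of $\mu_1,\mu_2\in L^1$, the graded antisymmetry of the Nijenhuis-Richardson bracket gives $[\mu_1,\mu_2]_{NR}=[\mu_2,\mu_1]_{NR}$, so for all $\lambda,\eta\in\mathbb{K}$,
\begin{equation*}
[\lambda\mu_1+\eta\mu_2,\;\lambda\mu_1+\eta\mu_2]_{NR}=\lambda^2[\mu_1,\mu_1]_{NR}+2\lambda\eta\,[\mu_1,\mu_2]_{NR}+\eta^2[\mu_2,\mu_2]_{NR}.
\end{equation*}
Once $\mu_1,\mu_2$ are known to be BiHom-Lie brackets the outer two terms vanish, so this identity shows that $\lambda\mu_1+\eta\mu_2$ is a BiHom-Lie bracket for every $\lambda,\eta$ if and only if $[\mu_1,\mu_2]_{NR}=0$. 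By the definition of compatibility, namely that $\lambda[\cdot,\cdot]_1+\eta[\cdot,\cdot]_2$ is a BiHom-Lie bracket for all scalars, this is precisely the assertion that $([\cdot,\cdot]_1,[\cdot,\cdot]_2)$ is a compatible BiHom-Lie algebra structure. Skew-symmetry is not an obstruction, since any linear combination of two (twisted) skew-symmetric brackets is again skew-symmetric, and $C^2_{BiHom}(g,g)$ is a linear subspace so every $\lambda\mu_1+\eta\mu_2$ again lies in it.

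Assembling the pieces, the assignment $(\mu_1,\mu_2)\mapsto([\cdot,\cdot]_1,[\cdot,\cdot]_2)$ furnishes the claimed bijection: a pair is Maurer-Cartan exactly when each component is a BiHom-Lie bracket and the two are compatible. The step I expect to be the main technical point, were one to avoid polarization, is the direct verification that the cross term expands to the explicit six-term cyclic expression of the compatibility condition. Carrying out $\mu_1\diamond\mu_2+\mu_2\diamond\mu_1$ over the $(2,1)$-shuffles produces terms $[[p_i,p_j]_2,\alpha\beta(p_k)]_1$ and $[[p_i,p_j]_1,\alpha\beta(p_k)]_2$, and one must invoke the BiHom skew-symmetry together with the multiplicativity $\alpha([\cdot,\cdot]_i)=[\alpha(\cdot),\alpha(\cdot)]_i$ and $\beta([\cdot,\cdot]_i)=[\beta(\cdot),\beta(\cdot)]_i$ to rewrite them in the symmetric form $[\beta^2(p_k),[\beta(p_i),\alpha(p_j)]_1]_2$ appearing in the definition. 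The polarization argument makes this bookkeeping logically unnecessary, but it is the computation that pins the abstract Maurer-Cartan equation to the concrete axiom.
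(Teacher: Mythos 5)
Your proposal is correct and follows essentially the same route as the paper: both reduce the Maurer--Cartan conditions for a pair $(\mu_1,\mu_2)\in C^{2}_{BiHom}(g,g)\oplus C^{2}_{BiHom}(g,g)$ with $d_1=d_2=0$ to the three equations $[\mu_1,\mu_1]_{NR}=0$, $[\mu_2,\mu_2]_{NR}=0$, $[\mu_1,\mu_2]_{NR}=0$, and identify these with the two BiHom-Jacobi identities and the compatibility condition. The only difference is that the paper merely asserts the third equivalence, whereas you justify it via the polarization identity $[\lambda\mu_1+\eta\mu_2,\lambda\mu_1+\eta\mu_2]_{NR}=\lambda^2[\mu_1,\mu_1]_{NR}+2\lambda\eta[\mu_1,\mu_2]_{NR}+\eta^2[\mu_2,\mu_2]_{NR}$ (which tacitly requires $\mathrm{char}\,\mathbb{K}\neq 2$) -- a detail the paper omits but that strengthens the argument.
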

\begin{proof}
	Let $[\cdot , \cdot]_{1}$ and $[\cdot , \cdot]_{2}$ be two multiplicative skew-symmetric bilinear brackets on $g$. Then the brackets $[\cdot , \cdot]_{1}$ and $[\cdot , \cdot]_{2}$ correspond to elements (say, $\mu_{1}$ and $\mu_{2}$, respectively) in $C^{2}_{BiHom}(g,g)$. Then
	\begin{enumerate}
		\item 	$[\cdot , \cdot]_{1}$ is a BiHom-Lie bracket $\Longleftrightarrow[\mu_{1}, \mu_{1}]_{NR} = 0;$ \\
		\item $[\cdot , \cdot]_{2}$ is a BiHom-Lie bracket $\Longleftrightarrow [\mu_{2}, \mu_{2}]_{NR} = 0;$\\
		\item compatibility   condition (3)$\Longleftrightarrow [\mu_{1}, \mu_{2}]_{NR} = 0.$
	\end{enumerate}Hence $(g, [\cdot , \cdot]_{1}, [\cdot , \cdot]_{2}, \alpha,\beta)$ is a compatible BiHom-Lie algebra if and only if $(\mu_{1}, \mu_{2})$ is a Maurer-Cartan element in the bidifferential graded Lie algebra $(C^{\ast+1}_{BiHom}(g, g), [\cdot , \cdot]_{NR}, d_{1} = 0, d_{2} = 0)$.
\end{proof}\begin{prop}
Let $(g, [\cdot , \cdot]_{1}, [\cdot , \cdot]_{2},\alpha,\beta)$ be a compatible BiHom-Lie algebra. Then for any multiplicative skew-symmetric bilinear operations $[\cdot , \cdot]^{\prime}_{1}$ and $[\cdot , \cdot]^{\prime}_{2}$ on $g$, the 5-tuple
$$(g, [\cdot , \cdot]_{1} + [\cdot , \cdot]^{\prime}_{1}, [\cdot , \cdot]_{2} + [\cdot , \cdot]^{\prime}_{2},\alpha,\beta)$$
is a compatible BiHom-Lie algebra if and only if $(\mu^{\prime}_{1}, \mu^{\prime}_{2})$ is a Maurer-Cartan element in the bidifferential graded Lie algebra $(C^{\ast+1}_{BiHom}(g, g), [\cdot , \cdot]_{NR}, d_{1} = [\mu_{1},-], d_{2} = [\mu_{2},-])$. Here $\mu^{\prime}_{1}, \mu^{\prime}_{2} \in C^{2}_{BiHom}(g, g)$ denote the elements corresponding to the brackets $[\cdot , \cdot]^{\prime}_{1}$  and $[\cdot , \cdot]^{\prime}_{2}$, respectively.
\end{prop}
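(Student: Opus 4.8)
The plan is to deduce this statement as an immediate consequence of the Maurer-Cartan characterization Theorem established above together with the twisting Proposition for bidifferential graded Lie algebras; no genuinely new computation is needed, only a careful chaining of the two equivalences. First I would record that, since $(g, [\cdot,\cdot]_{1}, [\cdot,\cdot]_{2}, \alpha,\beta)$ is by hypothesis a compatible BiHom-Lie algebra, the correspondence Theorem tells us that the pair $(\mu_{1}, \mu_{2}) \in C^{2}_{BiHom}(g,g) \oplus C^{2}_{BiHom}(g,g)$ of cochains associated to $[\cdot,\cdot]_{1}$ and $[\cdot,\cdot]_{2}$ is a Maurer-Cartan element in the bidifferential graded Lie algebra $(C^{\ast+1}_{BiHom}(g,g), [\cdot,\cdot]_{NR}, d_{1}=0, d_{2}=0)$. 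This is precisely the hypothesis required to apply the twisting Proposition with $(\theta_{1}, \theta_{2}) = (\mu_{1}, \mu_{2})$, which yields the bidifferential graded Lie algebra $(C^{\ast+1}_{BiHom}(g,g), [\cdot,\cdot]_{NR}, d_{1}^{\mu_{1}}, d_{2}^{\mu_{2}})$; because here $d_{1}=d_{2}=0$, the twisted differentials are exactly $d_{1}^{\mu_{1}} = [\mu_{1},-]$ and $d_{2}^{\mu_{2}} = [\mu_{2},-]$, matching the algebra named in the statement.

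Next I would carry out a short bookkeeping step concerning the dictionary between skew-symmetric bilinear brackets and degree-one cochains. Since this correspondence is linear, the operation $[\cdot,\cdot]_{i} + [\cdot,\cdot]^{\prime}_{i}$ corresponds to the cochain $\mu_{i} + \mu^{\prime}_{i}$ for $i = 1,2$. At this point I would also note that the hypothesis that $[\cdot,\cdot]^{\prime}_{1}$ and $[\cdot,\cdot]^{\prime}_{2}$ are \emph{multiplicative} skew-symmetric operations is exactly what guarantees $\mu^{\prime}_{1}, \mu^{\prime}_{2} \in C^{2}_{BiHom}(g,g) = L^{1}$, so that the sums $\mu_{i} + \mu^{\prime}_{i}$ are genuinely formed inside the graded Lie algebra and the correspondence Theorem applies to them. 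Invoking that Theorem once more, this time for the operations $[\cdot,\cdot]_{i} + [\cdot,\cdot]^{\prime}_{i}$, the $5$-tuple $(g, [\cdot,\cdot]_{1} + [\cdot,\cdot]^{\prime}_{1}, [\cdot,\cdot]_{2} + [\cdot,\cdot]^{\prime}_{2}, \alpha,\beta)$ is a compatible BiHom-Lie algebra if and only if $(\mu_{1} + \mu^{\prime}_{1}, \mu_{2} + \mu^{\prime}_{2})$ is a Maurer-Cartan element in the untwisted bidifferential graded Lie algebra $(C^{\ast+1}_{BiHom}(g,g), [\cdot,\cdot]_{NR}, 0, 0)$.

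Finally I would apply the second half of the twisting Proposition with $v_{1} = \mu^{\prime}_{1}$ and $v_{2} = \mu^{\prime}_{2}$: the pair $(\mu_{1} + \mu^{\prime}_{1}, \mu_{2} + \mu^{\prime}_{2})$ is a Maurer-Cartan element in $(C^{\ast+1}_{BiHom}(g,g), [\cdot,\cdot]_{NR}, 0, 0)$ if and only if $(\mu^{\prime}_{1}, \mu^{\prime}_{2})$ is a Maurer-Cartan element in the twisted algebra $(C^{\ast+1}_{BiHom}(g,g), [\cdot,\cdot]_{NR}, [\mu_{1},-], [\mu_{2},-])$. Concatenating this with the equivalence of the previous paragraph gives exactly the asserted if-and-only-if statement.

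I do not expect any substantial obstacle, precisely because all the real content is packaged into the two earlier results; the final Proposition is their formal combination. The only points deserving explicit care are the linearity of the bracket-to-cochain dictionary (so that $[\cdot,\cdot]_{i}+[\cdot,\cdot]^{\prime}_{i}$ corresponds to $\mu_{i}+\mu^{\prime}_{i}$) and the verification that the multiplicativity of $[\cdot,\cdot]^{\prime}_{1},[\cdot,\cdot]^{\prime}_{2}$ places $\mu^{\prime}_{1},\mu^{\prime}_{2}$ in $C^{2}_{BiHom}(g,g)$; this is the single hypothesis that makes the whole twisting mechanism available, and is where I would focus the write-up.
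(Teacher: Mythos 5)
Your argument is correct and is exactly the intended one: the paper states this proposition without proof, immediately after setting up the two ingredients you use, namely the Maurer--Cartan characterization theorem and the twisting proposition for bidifferential graded Lie algebras. Chaining those two equivalences (with the observation that the bracket-to-cochain dictionary is linear and that multiplicativity places $\mu'_{1},\mu'_{2}$ in $C^{2}_{BiHom}(g,g)$) is precisely the standard route, so nothing is missing.
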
In the following, we define representations of a compatible BiHom-Lie algebra.\begin{defn}
A representation of the compatible BiHom-Lie algebra $(g, [\cdot , \cdot]_{1}, [\cdot , \cdot]_{2}, \alpha,\beta)$ consists of a
5-tuple $(V, \bullet_{1}, \bullet_{2}, \alpha_{V},\beta_{V})$ such that\\
(i)$ (V, \bullet_{1},\alpha_{V},\beta_{V})$ is a representation of the BiHom-Lie algebra $(g,[. , .]_{1},\alpha,\beta)$\\
(ii) $(V, \bullet_{2},\alpha_{V}, \beta_{V})$ is a representation of the BiHom-Lie algebra $(g,[. , .]_{2},\alpha,\beta)$\\
(iii) the following compatibility condition holds\\
$[\beta(p),q]_{1} \bullet_{2} \beta_{V}(v) + [\beta(p), q]_{2} \bullet_{1} \beta_{V}(v) = \alpha\beta(p) \bullet_{1} (q \bullet_{2} v) - \beta(q) \bullet_{2} (\alpha(p) \bullet_{1} v) + \alpha\beta(p) \bullet_{2} (q \bullet_{1} v) - \beta(q) \bullet_{1} (\alpha(p) \bullet_{2} v)$,
for all $p, q \in g$ and $v \in V $.
It follows that any compatible BiHom-Lie algebra $(g, [\cdot , \cdot]_{1}, [\cdot , \cdot]_{2},\alpha,\beta)$ is a representation of itself, where
$\bullet_{1} = [\cdot , \cdot]_{1}$ and $\bullet_{2} = [\cdot , \cdot]_{2}$. This is called the adjoint representation.
\end{defn}
\begin{re}Let $(g, [\cdot , \cdot]_{1}, [\cdot , \cdot]_{2}, \alpha,\beta)$ be a compatible BiHom-Lie algebra and $(V, \bullet_{1}, \bullet_{2},\alpha_{V},\beta_{V})$ be a representation of it. Then for any $\lambda, \eta \in \mathbb{K}$, the 4-tuple $(g, \lambda[\cdot , \cdot]_{1} + \eta[\cdot , \cdot]_{2}, \alpha,\beta)$ is a BiHom-Lie algebra and $(V, \lambda \bullet_{1} +\eta \bullet_{2},\alpha_{V},\beta_{V})$ is a representation of it. The proof of the following proposition is similar to the standard case.
\end{re}\begin{prop}
Let $(g, [\cdot , \cdot]_{1}, [\cdot , \cdot]_{2},\alpha,\beta)$ be a compatible BiHom-Lie algebra and $(V, \bullet_{1}, \bullet_{2},\alpha_{V},\beta_{V})$ be a representation of it. Then the direct sum $g \oplus V$ carries a compatible BiHom-Lie algebra structure with the linear homomorphism $\alpha \oplus \alpha_{V}$ and $\beta\oplus\beta_{V}$, and BiHom-Lie brackets
$$[(p, a), (q, b)]_{i}^{\ltimes} = ([p, q]_{i}, x \bullet_{i} b - (\alpha^{-1}\beta(y)) \bullet_{i}( \alpha_{V}\beta_{V}^{-1}(a)),~~for~~~ i = 1, 2  ~~and~~ (p, a), (q, b) \in g \oplus V.$$ This is called the semidirect product.
\end{prop}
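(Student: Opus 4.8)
The plan is to avoid a head-on verification of the six-term compatibility condition on $g\oplus V$ and instead exploit the linear-pencil description of compatibility together with the Remark that immediately precedes this statement. Set $\widetilde\alpha=\alpha\oplus\alpha_{V}$ and $\widetilde\beta=\beta\oplus\beta_{V}$. First I would record that $\widetilde\alpha$ and $\widetilde\beta$ commute, since $\alpha,\beta$ commute on $g$ and $\alpha_{V},\beta_{V}$ commute on $V$, and that they are bijective whenever $\alpha,\beta,\alpha_{V},\beta_{V}$ are (this bijectivity is tacitly needed for the semidirect bracket, which contains $\alpha^{-1}\beta$ and $\alpha_{V}\beta_{V}^{-1}$, to be defined). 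By the definition of compatibility it then suffices to prove that for every $\lambda,\eta\in\mathbb{K}$ the single operation $\lambda[\cdot,\cdot]_{1}^{\ltimes}+\eta[\cdot,\cdot]_{2}^{\ltimes}$ is a BiHom-Lie bracket on $g\oplus V$ with respect to $\widetilde\alpha,\widetilde\beta$.

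The key observation I would make is that the semidirect product bracket depends linearly on the pair consisting of the bracket on $g$ and the action on $V$. Inspecting the two components of
$$[(p,a),(q,b)]_{i}^{\ltimes}=([p,q]_{i},\,p\bullet_{i}b-(\alpha^{-1}\beta(q))\bullet_{i}(\alpha_{V}\beta_{V}^{-1}(a))),$$
the $g$-slot is $[p,q]_{i}$ and the $V$-slot is assembled from $\bullet_{i}$ by a fixed expression in which the twisting operators $\alpha^{-1}\beta$ and $\alpha_{V}\beta_{V}^{-1}$ do not depend on the index $i$. Hence the pencil of brackets $\lambda[\cdot,\cdot]_{1}^{\ltimes}+\eta[\cdot,\cdot]_{2}^{\ltimes}$ coincides, slot by slot, with the semidirect bracket built from the bracket $\lambda[\cdot,\cdot]_{1}+\eta[\cdot,\cdot]_{2}$ on $g$ and the action $\lambda\bullet_{1}+\eta\bullet_{2}$ on $V$.

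Next I would invoke the preceding Remark: because $(g,[\cdot,\cdot]_{1},[\cdot,\cdot]_{2},\alpha,\beta)$ is compatible and $(V,\bullet_{1},\bullet_{2},\alpha_{V},\beta_{V})$ is a representation of it, the $4$-tuple $(g,\lambda[\cdot,\cdot]_{1}+\eta[\cdot,\cdot]_{2},\alpha,\beta)$ is a BiHom-Lie algebra and $(V,\lambda\bullet_{1}+\eta\bullet_{2},\alpha_{V},\beta_{V})$ is a representation of it, for all $\lambda,\eta$. Applying the earlier example, namely that the semidirect product of a BiHom-Lie algebra with a representation is again a BiHom-Lie algebra, to exactly this data shows that $\lambda[\cdot,\cdot]_{1}^{\ltimes}+\eta[\cdot,\cdot]_{2}^{\ltimes}$ is a BiHom-Lie bracket on $g\oplus V$. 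Specializing $(\lambda,\eta)=(1,0)$ and $(0,1)$ yields that $[\cdot,\cdot]_{1}^{\ltimes}$ and $[\cdot,\cdot]_{2}^{\ltimes}$ are individually BiHom-Lie brackets, while arbitrary $(\lambda,\eta)$ delivers their compatibility; by the definition of a compatible BiHom-Lie algebra this is precisely the asserted structure on $g\oplus V$.

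The genuinely laborious content, that the representation compatibility condition (iii) is what forces the $V$-component of the six-term identity to vanish, has already been packaged into the Remark, so in the route above the only point demanding care is the linearity bookkeeping of the second paragraph, namely confirming that the fixed maps $\alpha^{-1}\beta$ and $\alpha_{V}\beta_{V}^{-1}$ are independent of $\lambda,\eta$ so that the pencil of semidirect brackets really is the semidirect bracket of the pencil. A reader wanting a self-contained argument could instead expand the six-term compatibility condition on $g\oplus V$ directly: its $g$-component reduces to the compatibility of $[\cdot,\cdot]_{1}$ and $[\cdot,\cdot]_{2}$ on $g$, while its $V$-component, after expanding the two mixed brackets and regrouping, collapses to condition (iii) of the representation (with the pure terms absorbed by conditions (i) and (ii)). That $V$-component computation is the one step I would expect to be heavy, and it is exactly what the pencil argument lets us avoid.
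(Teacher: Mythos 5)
Your argument is correct, and it is worth noting that the paper itself offers no proof of this proposition: the preceding Remark simply declares that "the proof \ldots is similar to the standard case," which points at a direct verification of the skew-symmetry, BiHom-Jacobi, and six-term compatibility identities on $g\oplus V$, with the $V$-components collapsing to conditions (i)--(iii) of the representation. Your pencil argument is a genuinely different and tidier route: since the twisting maps $\alpha^{-1}\beta$ and $\alpha_{V}\beta_{V}^{-1}$ appearing in the semidirect bracket do not depend on the index $i$, the bracket $\lambda[\cdot,\cdot]_{1}^{\ltimes}+\eta[\cdot,\cdot]_{2}^{\ltimes}$ is literally the semidirect bracket of the pencil data $(\lambda[\cdot,\cdot]_{1}+\eta[\cdot,\cdot]_{2},\ \lambda\bullet_{1}+\eta\bullet_{2})$, so the Remark plus the single-bracket semidirect product example immediately give that every member of the pencil is a BiHom-Lie bracket, which is exactly the definition of compatibility. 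What your approach buys is the avoidance of the heavy $V$-component computation; what it costs is that the burden is shifted onto the Remark, which the paper also states without proof, so in a fully self-contained write-up one of the two (the Remark, or the direct six-term expansion you sketch at the end) would still have to be verified by hand. You correctly flag the tacit bijectivity hypotheses on $\alpha,\beta,\alpha_{V},\beta_{V}$ needed for the bracket to be defined, and your reading of the statement's typographical slips ($x$ for $p$, $y$ for $q$) is the intended one.
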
\section{Cohomology of compatible BiHom-Lie algebras}
In this section, we introduce the cohomology of a compatible BiHom-Lie algebra with coefficients in a representation.
Let $(g, [\cdot , \cdot]_{1}, [\cdot , \cdot]_{2}, \alpha,\beta)$ be a compatible BiHom-Lie algebra and $(V, \bullet_{1}, \bullet_{2}, \alpha_{V}, \beta_{V})$ be a representation of it. Let $^{1}\delta_{BiHom} : C^{n} _{BiHom}(g, V )\longrightarrow C^{n+1} _{BiHom}(g, V )$ (resp.$^{2}\delta_{BiHom}: C^{n} _{BiHom}(g, V )\longrightarrow C^{n+1} _{BiHom}(g, V )$, for $n \geq 0$, be the coboundary operator for the Chevalley-Eilenberg cohomology of the BiHom-Lie algebra $(g, [\cdot, \cdot]_{1},\alpha,\beta)$ with coefficients in the representation $(V, \bullet_{1},\alpha_{V},\beta_{V})$ (resp. of the BiHom-Lie algebra $(g, [\cdot , \cdot]_{2},\alpha,\beta)$ with coefficients in the representation $(V, \bullet_{2},\alpha_{V},\beta_{V})$. Then we have\\
\begin{center}
$(^{1}\delta_{BiHom})^{2} = 0$ and  $(^{2}\delta_{BiHom})^{2} = 0$.
\end{center}
Moreover, we have the following.\begin{prop}\label{prop4.1} \begin{center} The coboundary operators $^{1}\delta_{BiHom}$ and $^{2} \delta_{BiHom}$ satisfy the following compatibility\\ $^{1}\delta_{BiHom} \circ   ^{2}\delta_{BiHom} + ^{2}\delta_{BiHom} \circ   ^{1}\delta_{BiHom} = 0.$ \end{center} \end{prop}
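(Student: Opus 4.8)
The plan is to recognize both coboundary operators as Nijenhuis--Richardson brackets with the structure elements $\mu_1,\mu_2\in C^2_{BiHom}(g,g)$ corresponding to the two brackets, and then to deduce the stated anticommutativity from the graded Jacobi identity for $[\cdot,\cdot]_{NR}$ together with the Maurer--Cartan identity $[\mu_1,\mu_2]_{NR}=0$ furnished by the Maurer--Cartan characterization of compatible BiHom-Lie algebras established earlier in Section~3. I would begin with the adjoint representation, where the coboundary operator attached to $[\cdot,\cdot]_i$ is exactly $^{i}\delta_{BiHom}f=(-1)^{n-1}[\mu_i,f]_{NR}$ for $f\in C^n_{BiHom}(g,g)$, as recorded in Section~2.

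With this description the computation is purely formal. For $f\in C^n_{BiHom}(g,g)$ one has $^{2}\delta_{BiHom}f=(-1)^{n-1}[\mu_2,f]_{NR}\in C^{n+1}_{BiHom}(g,g)$, and applying $^{1}\delta_{BiHom}$, which on a cochain of degree $n+1$ carries the sign $(-1)^{n}$, gives $^{1}\delta_{BiHom}\,{}^{2}\delta_{BiHom}f=-[\mu_1,[\mu_2,f]_{NR}]_{NR}$; symmetrically $^{2}\delta_{BiHom}\,{}^{1}\delta_{BiHom}f=-[\mu_2,[\mu_1,f]_{NR}]_{NR}$. Since $\mu_1$ and $\mu_2$ both have degree $1$ in the shifted grading of $C^{\ast+1}_{BiHom}(g,g)$, the graded Jacobi identity reads $[\mu_1,[\mu_2,f]_{NR}]_{NR}=[[\mu_1,\mu_2]_{NR},f]_{NR}-[\mu_2,[\mu_1,f]_{NR}]_{NR}$, and the first term on the right vanishes because $[\mu_1,\mu_2]_{NR}=0$. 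Hence $[\mu_1,[\mu_2,f]_{NR}]_{NR}+[\mu_2,[\mu_1,f]_{NR}]_{NR}=0$, and summing the two composites yields $({}^{1}\delta_{BiHom}\,{}^{2}\delta_{BiHom}+{}^{2}\delta_{BiHom}\,{}^{1}\delta_{BiHom})f=0$.

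To reach an arbitrary representation $(V,\bullet_1,\bullet_2,\alpha_V,\beta_V)$ I would pass to the semidirect product compatible BiHom-Lie algebra $g\ltimes V$ of the preceding proposition, whose two structure elements $\widehat\mu_1,\widehat\mu_2\in C^2_{BiHom}(g\oplus V,g\oplus V)$ again form a Maurer--Cartan pair by the same characterization, so that $[\widehat\mu_1,\widehat\mu_2]_{NR}=0$. The cochains $C^n_{BiHom}(g,V)$ embed into $C^n_{BiHom}(g\oplus V,g\oplus V)$ as those cochains that are $V$-valued and vanish as soon as one of their arguments lies in $V$; under this embedding the two coboundary operators $^{i}\delta_{BiHom}$ with coefficients in $V$ are precisely the restrictions of $(-1)^{n-1}[\widehat\mu_i,-]_{NR}$. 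Granting this, the adjoint computation above applies verbatim and delivers the compatibility for $V$.

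The main obstacle is exactly this last identification: one must verify that restricting $[\widehat\mu_i,-]_{NR}$ to the embedded $V$-valued cochains reproduces the explicit two-sum formula defining $^{i}\delta_{BiHom}$, that is, that the extra terms generated by the semidirect product structure either vanish (because the embedded cochains are annihilated by $V$-inputs and $V$ is abelian in $g\ltimes V$) or collapse into the action terms $\alpha\beta^{n-1}(p)\bullet_i(-)$. This step is where the representation axioms, and in particular the compatibility condition (iii) in the definition of a representation, are consumed. As a purely computational alternative one could instead expand $^{1}\delta_{BiHom}\,{}^{2}\delta_{BiHom}+{}^{2}\delta_{BiHom}\,{}^{1}\delta_{BiHom}$ directly from the defining formula and cancel the resulting terms using axioms (i)--(iii); in that route the bookkeeping of the mixed action-and-bracket terms would be the delicate point.
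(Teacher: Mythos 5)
Your proposal follows essentially the same route as the paper: pass to the semidirect product $g\oplus V$, lift a cochain $f\in C^n_{BiHom}(g,V)$ to $\tilde f\in C^n_{BiHom}(g\oplus V,g\oplus V)$, identify $^{i}\delta_{BiHom}$ with $(-1)^{n-1}[\pi_i,-]_{NR}$ there, and conclude by the graded Jacobi identity together with $[\pi_1,\pi_2]_{NR}=0$ from the Maurer--Cartan characterization. The identification step you flag as the main obstacle is exactly the one the paper records (without detailed verification) just before its proof, so your argument is correct and matches the paper's.
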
 Before we prove the above proposition, we first observe the followings. For a compatible BiHom-Lie algebra
$(g, [\cdot , \cdot]_{1}, [\cdot , \cdot]_{2},\alpha,\beta)$ and a representation $(V,\bullet_{1}, \bullet_{2},\alpha_{V},\beta_{V})$, we consider the semidirect product compatible BiHom-Lie algebra structure on $g\oplus V$ given in Proposition 3.6. We denote by\\
$\pi_{1}, \pi_{2} \in C^{2}_{BiHom}(g\oplus V, g\oplus V )$ the elements corresponding to the BiHom-Lie brackets $[\cdot , \cdot]^{\ltimes}_{1}$ and $[\cdot , \cdot]^{\ltimes}_{2}$ on $g \oplus V$ , respectively. Let
\begin{center}

$^{1}\delta_{BiHom} : C^{n} _{BiHom}(g \oplus V, g \oplus V ) \longrightarrow C^{n+1} _{BiHom}(g \oplus V, g \oplus V )$, for $n \geq0$,\\
$^{2}\delta_{BiHom} : C^{n} _{BiHom}(g \oplus V, g \oplus V ) \longrightarrow C^{n+1} _{BiHom}(g \oplus V, g \oplus V )$, for $n \geq 0$,
\end{center}denote respectively the coboundary operator for the Chevalley-Eilenberg cohomology of the BiHom-Lie algebra
$(g \oplus V, [\cdot , \cdot]^{\ltimes}_{1} , \alpha\oplus \alpha_{V}, \beta\oplus \beta_{V})$ (resp. of the BiHom-Lie algebra $(g \oplus V, [\cdot , \cdot]^{\ltimes}_{2} , \alpha\oplus \alpha_{V}, \beta\oplus \beta_{V})$ with coefficients in itself. Note that any map $f \in C^{n} _{BiHom}(g \oplus V)$ can be lifted to a map $\tilde{f} \in C_{BiHom}^{n}(g\oplus V,g \oplus V)$ by
\begin{center}
$\tilde{f}((p_{1},v_{1}),...,(p_{n},v_{n}))=(0, f(p_{1},...,p_{n})).$
\end{center}
Then $f = 0$ if and only if $\tilde{f}$ = 0.\\
With these notations, for any $f \in  C^{n} _{BiHom}(g,V)$, we have\\
$(\widetilde{^{1}\delta_{BiHom}f})=\delta^{1}_{BiHom}(\tilde{f})=(-1)^{n-1}[\pi_{1},\tilde{f}]_{NR}$ and $(\widetilde{^{2}\delta_{BiHom}f})=\delta^{2}_{BiHom}(\tilde{f})=(-1)^{n-1}[\pi_{2},\tilde{f}]_{NR}$.\begin{proof}(Proposition \ref{prop4.1}) For any $f \in C^{n}_{BiHom}(g,V)$, we have\begin{align*}
	\widetilde{(^{1}\delta_{BiHom}\circ^{2}\delta_{BiHom}+{^{2}\delta_{BiHom}}\circ^{1}\delta_{BiHom})(f)}&=\widetilde{^{1}\delta_{BiHom}(^{2}\delta_{BiHom}f)}+\widetilde{^{2}\delta_{BiHom}(^{1}\delta_{BiHom}f)}\\
	&=(-1)^{n}[\pi_{1},\widetilde{^{2}\delta_{BiHom}f}]_{NR}+(-1)^{n}[\pi_{2},\widetilde{^{1}\delta_{BiHom}f}]_{NR}\\
	&=-[\pi_{1}, [\pi_{2},\tilde{f}]_{NR}]_{NR} - [\pi_{2}, [\pi_{1},\tilde{f}]_{NR}]_{NR}\\
	&= [\pi_{2}, [\pi_{1},\tilde{f}]_{NR}]_{NR}-[[\pi_{1}, \pi_{2}]_{NR}, \tilde{f}]_{NR}  - [\pi_{2}, [\pi_{1},\tilde{f}]_{NR}]_{NR}\\
	&=0\end{align*} \begin{center}
 where $([\pi_{1},\pi_{2}]_{NR} = 0)$.\end{center}Therefore, it follows that $(^{1}\delta_{BiHom}\circ ^{2}\delta_{BiHom}+^{2}\delta_{BiHom}\circ ^{1}\delta_{BiHom})(f)= 0$. Hence the result follows.\end{proof}We are now in a position to define the cohomology of a compatible BiHom-Lie algebra\\$(g, [\cdot, \cdot]_{1}, [\cdot, \cdot]_{2},\alpha,\beta)$ with coefficients in a representation $(V, \bullet_{1}, \bullet_{2},\alpha_{V},\beta_{V})$. For each $n \geq 0$, we define an abelian group $C^{n}_{cBiHom}(g, V )$ as follows:
\begin{center}
\begin{align*}
C^{0}_{cBiHom}(g, V) &= C^{0}_{BiHom}(g,V) \cap \{v \in V | \alpha\beta^{-1}(p) \bullet_{1} v = \alpha\beta^{-1}(p)\bullet_{2} v, \forall p \in g\}\\
&= \{v \in V | \alpha_{V}(v) = v\textit{ , } \beta_{V}(v) = v \textit{ and } \alpha\beta^{-1}(p) \bullet_{1} v = \alpha\beta^{-1} (p) \bullet_{2} v, \forall p \in g\},
\end{align*}
$C^{n}_{cBiHom}(g, V ) = \underbrace{C^{n}_{BiHom}(g, V )\oplus...\oplus C^{n}_{BiHom}(g, V )}_{\textit{n- copies}}$, for $n \geq 1$.
\end{center}Define a map $\delta_{cBiHom }: C^{n}_{cBiHom}(g, V ) \longrightarrow C^{n+1}_{cBiHom}(g,V ), for n \geq 0$ by\begin{center}
$\delta_{cBiHom}(v)(p) = \alpha\beta^{-1}(p) \bullet_{1} v = \alpha\beta^{-1}(p) \bullet_{2} v$, for $v \in C^{0}_{ cBiHom}(g, V )$ and $p \in g,$\\
$\delta_{cBiHom}(f_{1},...,f_{n}) =(^{1}\delta_{BiHom}f_{1},...,\underbrace{^{1}\delta_{BiHom}f_{i} + ^{2}\delta_{BiHom}f_{i-1}}_{\textit{i-th position}},...,^{2}\delta_{BiHom}f_{n})$,
\end{center}
for $(f_{1},...,f_{n}) \in C^{n}_{cBiHom}(g, V )$.Then we have the following.
\begin{prop}
	The map $\delta_{cBiHom}$ is a coboundary map, i.e., $(\delta_{cBiHom})^{2} = 0$.
\end{prop}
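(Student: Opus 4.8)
The plan is to recognize $\delta_{cBiHom}$ as the total differential of the double complex assembled from the two coboundary operators, so that $(\delta_{cBiHom})^{2}=0$ reduces entirely to the three structural identities already in hand. Writing $d_{1}={}^{1}\delta_{BiHom}$ and $d_{2}={}^{2}\delta_{BiHom}$ for brevity, the only inputs I would use are $d_{1}^{2}=0$ and $d_{2}^{2}=0$ (recorded just before Proposition \ref{prop4.1}) together with the anticommutation $d_{1}d_{2}+d_{2}d_{1}=0$ supplied by Proposition \ref{prop4.1}. No new properties of the brackets, the representation, or the twisting maps are needed beyond these; all the BiHom-specific content has already been absorbed into those three relations.

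First I would dispose of the degree-zero case. For $v\in C^{0}_{cBiHom}(g,V)$ the defining condition forces $d_{1}v=d_{2}v$, so this common value is a single element $w\in C^{1}_{cBiHom}(g,V)=C^{1}_{BiHom}(g,V)$. Applying $\delta_{cBiHom}$ again gives $\delta_{cBiHom}(w)=(d_{1}w,\,d_{2}w)=(d_{1}^{2}v,\,d_{2}^{2}v)=(0,0)$, using $w=d_{1}v$ for the first slot and $w=d_{2}v$ for the second. This is where the extra compatibility built into $C^{0}_{cBiHom}(g,V)$ is used, and it is the reason the degree-zero group was defined with that intersection.

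For $n\geq1$ I would compute $(\delta_{cBiHom})^{2}(f_{1},\dots,f_{n})$ componentwise in $C^{n+2}_{cBiHom}(g,V)=(C^{n+2}_{BiHom}(g,V))^{\oplus(n+2)}$. One application of $\delta_{cBiHom}$ produces the $(n+1)$-tuple $(g_{1},\dots,g_{n+1})$ with $g_{1}=d_{1}f_{1}$, with $g_{n+1}=d_{2}f_{n}$, and with $g_{i}=d_{1}f_{i}+d_{2}f_{i-1}$ for $2\leq i\leq n$; a second application sends its $j$-th slot to $d_{1}g_{j}+d_{2}g_{j-1}$, suitably truncated at the two ends. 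To avoid a case split I would adopt the convention $f_{k}=0$ for $k\leq0$ and $k\geq n+1$, under which every slot $j$ (for $1\leq j\leq n+2$) collapses to the uniform expression $d_{1}^{2}f_{j}+(d_{1}d_{2}+d_{2}d_{1})f_{j-1}+d_{2}^{2}f_{j-2}$. The three identities then annihilate each of the three summands separately, so each slot vanishes and hence $(\delta_{cBiHom})^{2}=0$.

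The genuine content is concentrated in the single cross-term $(d_{1}d_{2}+d_{2}d_{1})f_{j-1}$, which is killed precisely by Proposition \ref{prop4.1}; the diagonal terms $d_{1}^{2}f_{j}$ and $d_{2}^{2}f_{j-2}$ vanish for the trivial reason that each operator is itself a differential. Consequently I expect the main obstacle to be purely organizational rather than conceptual: one must track the index shift between the $n$-tuple input and the $(n+1)$- and $(n+2)$-tuple outputs, and verify that the boundary slots $j=1,2,n+1,n+2$ really do assemble from the correct truncated pieces rather than from the generic interior formula. The uniform convention $f_{k}=0$ outside the range $1\leq k\leq n$ is what makes this bookkeeping disappear, reducing the whole proof to the observation that the three relations cover the three summands.
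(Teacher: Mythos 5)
Your proposal is correct and follows essentially the same route as the paper: both handle degree zero by using that $\delta_{cBiHom}v$ is the common value ${}^{1}\delta_{BiHom}v={}^{2}\delta_{BiHom}v$, and for $n\geq 1$ both expand $(\delta_{cBiHom})^{2}$ slot by slot so that each component is killed by $({}^{1}\delta_{BiHom})^{2}=0$, $({}^{2}\delta_{BiHom})^{2}=0$, and the anticommutation of Proposition \ref{prop4.1}. Your convention $f_{k}=0$ outside $1\leq k\leq n$ is merely a cleaner bookkeeping device for the boundary slots that the paper writes out explicitly.
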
\begin{proof}
For any $v \in C^{0}_{cBiHom}(g,V)$, we have
\begin{center}
	$(\delta_{cBiHom})^{2}(v) = \delta_{cBiHom}(\delta_{cBiHom}v) = (^{1}\delta_{BiHom}\delta_{cBiHom}v , ^{2}\delta_{BiHom}\delta_{cBiHom}v)$\\
	$=(^{1}\delta_{BiHom}$ $^{1}\delta_{BiHom}v $,$^{ 2}\delta_{BiHom}$  $^{2}\delta_{BiHom}v) = 0.$
	\end{center}Moreover, for any $(f_{1},...,f_{n})\in C^{n}_{cBiHom}(g,V), n \geq 1$, we have \begin{center}\begin{align*}&(\delta_{cBiHom})^{2}(f_{1},...,f_{n})\\&= \delta_{cBiHom}( ^{1} \delta_{BiHom}f_{1},...,^{1}\delta_{BiHom}f_{i} + ^{2}\delta_{BiHom}f_{i-1},...,^{2}\delta_{BiHom}f_{n})\\ &=( {^{1}\delta_{BiHom}} {^{1}\delta_{BiHom}}f_{1} , {^{2}\delta_{BiHom}}{^{1}\delta_{BiHom}}f_{1}  + {^{1}\delta_{BiHom}} {^{2}\delta_{BiHom}}f_{1}  + {^{1}\delta_{BiHom}} {^{1}\delta_{BiHom}f_{2}} ,...,\\&\underbrace{{^{2}\delta_{BiHom}} {^{2}\delta_{BiHom}}f_{i-2}  +  {^{2}\delta_{BiHom}}   {^{1}\delta_{BiHom}}f_{i-1}  +  {^{1}\delta_{BiHom}} {^{2}\delta_{BiHom}}f_{i-1} +  {^{1}\delta_{BiHom}}  {^{1}\delta_{BiHom}f_{i}} ,...,}_{3\leq i \leq n-1}\\&{^{2}\delta_{BiHom}} {^{2}\delta_{BiHom}}f_{n-1}  +  {^{2}\delta_{BiHom}} {^{1}\delta_{BiHom}}f_{n}  +  {^{1}\delta_{BiHom}} {^{2}\delta_{BiHom}}f_{n}, { ^{2}\delta_{BiHom}} {^{2}\delta_{BiHom}}f_{n} \\&= 0\end{align*}\end{center}
This proves that $(\delta_{cBiHom})^{2} = 0$.
	\end{proof}It follows from the above proposition that $\{C^{\ast}_{cBiHom}(g,V), \delta_{cBiHom}\}$ is a cochain complex. The corresponding cohomology groups\begin{center}
	$H^{n}_{cBiHom}(g, V ) =\frac{ Z^{n}_{cBiHom}(g,V)}{B^{n}_{cBiHom}(g,V)}= \frac{ Ker    \delta_{cBiHom} : C^{n}_{cBiHom}(g,V) \longrightarrow C^{n+1}_{cBiHom} (g,V)}{   Im    \delta_{cBiHom} : C^{n-1}_{cBiHom}(g,V) \longrightarrow C^{n}_{cBiHom}(g,V)}$ , for $n \geq 0$
\end{center}are called the cohomology of the compatible BiHom-Lie algebra $(g,[. , .]_{1}, [. , .]_{2},\alpha,\beta)$ with coefficients in the representation $(V,\bullet_{1},\bullet_{2},\alpha_{V},\beta_{V})$.

\par Let $g = (g, [\cdot , \cdot]_1, [\cdot , \cdot]_2, \a,\b)$ be a compatible BiHom-Lie algebra and $V = (V, \bullet_{1}, \bullet_{2}, \a_V,\b_V)$ be a representation of it.
Then we know from Remark 3.15 that $g_+ = (g, [\cdot , \cdot]_1 + [\cdot , \cdot]_2, \a+\b)$ is a BiHom-Lie algebra and $V_+ = (V, \bullet_{1}+ \bullet_{2} ,\a_V+\b_V)$ is a representation of it. Consider the cochain complex $\{C^*_{cBiHom}(g, V), \delta_{BiHom}\}$ of the compatible BiHom-Lie algebra $g$ with coefficients in the representation $V$ , and the cochain complex $\{C^*_{BiHom}(g_+, V_+),\delta_{BiHom}\}$ of the BiHom-Lie algebra $g_+$ with coefficients in $V$. Like wise in \cite{1}, we can establish a following theorem:
\begin{thm} The collection $\{\varphi\}_{n\geq0}$ defines a morphism of cochain complexes from to $\{C^*_{BiHom}(g_+, V_+), \delta_{BiHom}\}$. Hence, it induces a morphism $H^*_{cBiHom}(g, V ) \to H^{*}_{BiHom} (g_{+}, V_+)$ between corresponding cohomologies. where $\varphi_n$ is a map $\varphi_{n\geq0} : C^n_{cBiHom}(g,V) \to C^n_{BiHom}(g_+, V_+)$ defined by\begin{eqnarray}\varphi_{n\geq0}=\biggl\{\varphi_{0}(v)= &\frac12(v),~~ v \in C^0_{cBiHom}(g,V) \\\varphi_{n\geq1}((f_1, \cdots, f_n))=& f_1+ \cdots+ f_n, ~~ (f_1, \cdots, f_n)\in C^{n\geq1}_{cBiHom}(g,V)\biggr\}
	\end{eqnarray}
\end{thm}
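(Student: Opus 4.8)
The plan is to verify directly that the family $\{\varphi_n\}_{n\geq 0}$ intertwines the two coboundary operators, i.e. that $\delta_{BiHom}\circ\varphi_n = \varphi_{n+1}\circ\delta_{cBiHom}$ for every $n\geq 0$, where on the left $\delta_{BiHom}$ denotes the Chevalley--Eilenberg coboundary of the BiHom-Lie algebra $g_+$ with coefficients in $V_+$ (the BiHom-Lie algebra and representation of Remark 3.15, which carry the same twisting maps $\alpha,\beta$ and $\alpha_V,\beta_V$ as $g$ and $V$). Once this square is shown to commute in every degree, $\{\varphi_n\}$ automatically sends cocycles to cocycles and coboundaries to coboundaries, and hence descends to the claimed map $H^*_{cBiHom}(g,V)\to H^*_{BiHom}(g_+,V_+)$.

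The structural identity that drives the whole proof is that the coboundary of $g_+$ splits as a sum. Since the defining formula for $\delta_{BiHom}$ is $\mathbb{K}$-linear in both the bracket and the action, and since all three BiHom-Lie structures share the same twisting maps (so the weights $\alpha\beta^{\,n-1}$, $\alpha^{-1}\beta$ and $\beta$ in the formula are identical in each case), substituting $[\cdot,\cdot]_+=[\cdot,\cdot]_1+[\cdot,\cdot]_2$ and $\bullet_+=\bullet_1+\bullet_2$ and using multilinearity of the cochains gives, for every $f\in C^{n}_{BiHom}(g,V)$,
\[
\delta_{BiHom}f = {}^{1}\delta_{BiHom}f + {}^{2}\delta_{BiHom}f .
\]
I would record this first, together with the immediate remark that each $\varphi_n$ is well defined: the twist-compatibility conditions $\alpha_V\circ f=f\circ\alpha$ and $\beta_V\circ f=f\circ\beta$ are preserved under sums and scalar multiples, so $f_1+\cdots+f_n$ (resp. $\tfrac12 v$) indeed lands in $C^{n}_{BiHom}(g_+,V_+)$.

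For the commuting square I would split into two cases. In degree $0$, for $v\in C^{0}_{cBiHom}(g,V)$ and $p\in g$ one computes $\delta_{BiHom}(\varphi_0 v)(p)=\tfrac12\,\alpha\beta^{-1}(p)\bullet_+ v=\tfrac12\big(\alpha\beta^{-1}(p)\bullet_1 v+\alpha\beta^{-1}(p)\bullet_2 v\big)$; here the defining property of $C^{0}_{cBiHom}(g,V)$, namely $\alpha\beta^{-1}(p)\bullet_1 v=\alpha\beta^{-1}(p)\bullet_2 v$, collapses this to $\alpha\beta^{-1}(p)\bullet_1 v$, which is exactly $\varphi_1(\delta_{cBiHom}v)(p)$. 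This is the one place where the factor $\tfrac12$ in $\varphi_0$ is forced, and it is the most delicate point of the argument. In degree $n\geq 1$, the splitting identity gives $\delta_{BiHom}(\varphi_n(f_1,\dots,f_n))=\sum_{i=1}^{n}{}^{1}\delta_{BiHom}f_i+\sum_{i=1}^{n}{}^{2}\delta_{BiHom}f_i$, while on the other side $\varphi_{n+1}$ sums the $n+1$ components of
\[
\delta_{cBiHom}(f_1,\dots,f_n)=\big({}^{1}\delta_{BiHom}f_1,\ \dots,\ {}^{1}\delta_{BiHom}f_i+{}^{2}\delta_{BiHom}f_{i-1},\ \dots,\ {}^{2}\delta_{BiHom}f_n\big),
\]
and collecting the ${}^{1}\delta$-terms and the ${}^{2}\delta$-terms separately reproduces the identical expression.

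Having matched both sides in every degree, $\{\varphi_n\}$ is a morphism of cochain complexes, and the induced map on cohomology follows at once. The main obstacle is not any single computation but the bookkeeping required to see the telescoping in degree $n\geq1$ together with the degree-$0$ matching; both hinge on the splitting $\delta_{BiHom}={}^{1}\delta_{BiHom}+{}^{2}\delta_{BiHom}$ and on the extra condition $\alpha\beta^{-1}(p)\bullet_1 v=\alpha\beta^{-1}(p)\bullet_2 v$ built into $C^{0}_{cBiHom}(g,V)$.
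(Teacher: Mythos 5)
Your proof is correct, and it takes the expected route: the paper itself omits the proof entirely (deferring to the analogous Hom-Lie result in the literature), and your direct verification via the splitting $\delta_{BiHom}={}^{1}\delta_{BiHom}+{}^{2}\delta_{BiHom}$, the telescoping in degree $n\geq 1$, and the use of the condition $\alpha\beta^{-1}(p)\bullet_1 v=\alpha\beta^{-1}(p)\bullet_2 v$ to justify the factor $\tfrac12$ in degree $0$ is exactly the argument the authors intend. No gaps.
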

\noindent {\bf Acknowledgment:}
The authors would like to thank the referee for valuable comments and suggestions on this article.
\end{document}